\newtheorem{thm}{Theorem}[section]
\newtheorem{lem}[thm]{Lemma}
\newtheorem{prop}[thm]{Proposition}
\theoremstyle{definition}
\newtheorem{step}{Step}
\theoremstyle{remark}
\newtheorem{rem}[thm]{Remark}
\numberwithin{equation}{section}
\newcommand{\R}{\mathbb{R}}
\begin{document}
\title[]
%{Convergence rate of numerical scheme finding extremal functions of Sobolev inequalities}
{Convergence rate of the finite element approximation for extremizers of Sobolev inequalities} 
\author{Woocheol Choi}
\address{Department of Mathematics Education, Incheon National University, Incheon 22012, Korea}
\email{choiwc@inu.ac.kr}

\author{Younghun Hong}
\address{Department of Mathematics, Chung-Ang University, Seoul 06974, Korea}
\email{yhhong@cau.ac.kr}

\author{Jinmyoung Seok}
\address{Department of Mathematics, Kyonggi University, Suwon 16227, Korea}
\email{jmseok@kgu.ac.kr}

\subjclass[2010]{Primary  65N30,  65N12,  35J60}

\keywords{Finite element method. Extermizers of Sobolev inequalities. Lane-Emden equation}

\maketitle
\begin{abstract}
In this paper, we are concerned with the convergence rate of a FEM based numerical scheme approximating extremal functions of the Sobolev inequality. 
We prove that when the domain is polygonal and convex in $\R^2$,
the convergence of a finite element solution to an exact extremal function in $L^2$ and $H^1$ norms has the rates $O(h^2)$ and $O(h)$ respectively, where $h$ denotes the mesh size of a triangulation of the domain. 
\end{abstract}

\section{Introduction}
Let $\Omega \subset \R^N$ be a bounded domain, where $N \geq 2$. In this paper, we are concerned with the Sobolev inequality
\[
C(\Omega, p)\|u\|_{L^p(\Omega)} \leq \|\nabla u\|_{L^2(\Omega)},
\]
where $p \in (2,\, 2N/(N-2))$ for $N \geq 3$ and $p \in (2,\, \infty)$ for $N = 1, 2$.
It is well known that the best constant $C(\Omega, p)$, which is given by the infimum of the following minimization problem
\begin{equation}\label{eq-0-2}
C(\Omega, p) = \inf\left\{\frac{\|\nabla u\|_{L^2(\Omega)}}{\|u\|_{L^p(\Omega)}} ~\Big|~ u \in H^1_0(\Omega), u \neq 0\right\},
\end{equation}
is attained by a positive function $U_{\Omega, p}$ satisfying the semi-linear elliptic equation
\begin{equation}\label{Lane-Emden}
-\Delta u = |u|^{p-2}u \ \text{ in } \Omega, \quad u  \in H^1_0(\Omega).
\end{equation}
The aim of this paper is to obtain a sharp convergence rate of a numerical scheme for approximating the minimizer $U_{\Omega, p}$. This work is motivated by Tanaka-Sekine-Mizuguchi-Oishi \cite{TSMO} where they established convergence estimate for the best constant of the sobolev embedding $H_0^1 (\Omega) \rightarrow L^{p}(\Omega)$.

%\section{Convergence rate for the numerical scheme}

Now, we fix a polygonal convex domain $\Omega \subset \R^2$ and arbitrary $p \in (2, \infty)$. 
Let $\{T_h\}$ with $h>0$ be a family of regular triangulations of $\Omega$. (For the definition, we refer to \cite{Bartels}.)  
The finite element space $V_h \subset H_0^1 (\Omega)$ is given by
\begin{equation*}
V_h = \left\{ v \in H_0^1(\Omega) ~|~ \textrm{$v$ is a polynomial of degree $\leq 1$ on each $T \in T_h$}\right\}.
\end{equation*}
Define the following minimization problem on $V_h$, 
\begin{equation}\label{eq-0-1}
C_h(\Omega, p) = \min\left\{ \frac{\|\nabla \phi_h\|_{L^2} }{\|\phi_h\|_{L^p}} ~\Big|~ \phi_h \in V_h, \phi_h \neq 0 \right\}.
\end{equation}
Since $V_h$ is finite dimensional, it is complete with respect to $H^1_0$ norm.  
Then a standard argument showing the existence of a minimizer of \eqref{eq-0-2} applies in same manner to show the existence of a minimizer $U_h$ of the problem \eqref{eq-0-1}.

By the Lagrange multiplier theorem, it is easy to see that there exists a constant $\lambda_h >0$ such that
\begin{equation}\label{Euler-Lagrange-fem}
\int_{\Omega} \nabla U_h \nabla \phi_h dx = \lambda_h \int_{\Omega} |U_h|^{p-2} U_h \, \phi_h dx \quad \forall ~\phi_h \in V_h.
\end{equation}
Note that we may assume $\lambda_h = 1$ by redefining $U_h$ by $(\|\nabla U_h\|_{L^2}^2/\|U_h\|_{L^p}^p)^{\frac{1}{p-2}}U_h$.

%\begin{thm}
%There exists a family of maximizer $\{U_h\} \subset V_h$ of \eqref{eq-0-1} which converges in $H^1_0$ to a maximizer $U_0 \in H^1_0(\Omega)$ of \eqref{eq-0-2} as $h \to 0$ after choosing a subsequence. 
%\end{thm}

\begin{thm}\label{thm-1} Assume that $\Omega \subset \R^2$ is a bounded convex domain with a polygonal boundary and $p > 2$.
Let $\{U_h\}$ be a family of minimizers of the problem \eqref{eq-0-1} with $\lambda_n = 1$ in \eqref{Euler-Lagrange-fem}
and $U_0 \in H^1_0(\Omega)$ be a unique positive minimizer of the problem \eqref{eq-0-2} satisfying \eqref{Lane-Emden}.
Then the following statements hold true:
\begin{enumerate}[{\em (i)}]
\item For any sequence $\{h_n\} \to 0$, 
$\{U_{h_n}\}$ converges to either $U_0$ or $-U_0$ in $H^1_0(\Omega)$ by choosing a subsequence. 
\item There exists a universal constant $C > 0$ such that for any sequences $\{h_n\} \to 0$ and $\{U_{h_n}\} \to U_0$, there holds
\begin{equation}\label{eq-16}
\|U_{h_n}-U_0\|_{L^2} \leq  Ch_n^2 \quad \textrm{and}\quad \|U_{h_n}-U_0\|_{H^1} \leq Ch_n.
\end{equation}
\item The $L^\infty$ norm of $U_h$ is uniformly  bounded, i.e., there exists a universal constant $C > 0$ such that
\[
\|U_h\|_{L^\infty(\Omega)} \leq C.
\]
\end{enumerate}
\end{thm}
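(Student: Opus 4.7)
I would address (i), (iii), (ii) in that order, since strong $H^1$-convergence from (i) together with the uniform $L^\infty$-bound from (iii) are the natural inputs to the rate analysis in (ii). For (i), use the nodal interpolant $I_{h_n}U_0\in V_{h_n}$ as a competitor in \eqref{eq-0-1}: standard interpolation estimates plus $H^2$-regularity of $U_0$ (which holds on convex polygonal $\Omega$ after a short bootstrap from the 2D Sobolev embedding) give $C_{h_n}(\Omega,p)\to C(\Omega,p)$. Combined with the identity $\|\nabla U_{h_n}\|_{L^2}^2=\|U_{h_n}\|_{L^p}^p$ forced by $\lambda_{h_n}=1$, this yields a uniform $H^1_0$-bound on $\{U_{h_n}\}$. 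Rellich compactness then produces a weakly $H^1_0$-convergent subsequence with strong $L^p$-limit $U^\ast$, which is identified as a nontrivial minimizer of \eqref{eq-0-2}; the assumed uniqueness of the positive minimizer forces $U^\ast=\pm U_0$, and norm convergence $\|\nabla U_{h_n}\|_{L^2}\to\|\nabla U^\ast\|_{L^2}$ upgrades the weak convergence to strong $H^1_0$-convergence.

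\textbf{Part (iii), the main obstacle.}
Absent a maximum principle on $V_h$, the uniform $L^\infty$-bound cannot be read off directly from the discrete equation. I would argue by contradiction: if $\|U_{h_n}\|_{L^\infty}\to\infty$, then by (i) a subsequence satisfies $U_{h_n}\to\pm U_0$ in $H^1_0(\Omega)$ and hence in every $L^q(\Omega)$ via the 2D Sobolev embedding, so the right-hand side $|U_{h_n}|^{p-2}U_{h_n}$ of \eqref{Euler-Lagrange-fem} is uniformly bounded in every $L^q$. One must then propagate this into $L^\infty$-control on $U_{h_n}$ itself, either via a discrete $L^\infty$-stability estimate for the piecewise-linear FEM of $-\Delta$ on convex polygonal domains, or via a discrete Moser iteration that tests \eqref{Euler-Lagrange-fem} against quasi-interpolants of truncated powers of $U_{h_n}$, with the interpolation defect absorbed by inverse estimates on $V_h$.

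\textbf{Part (ii).}
With (i) and (iii) in hand, introduce the Ritz projection $P_{h_n}:H^1_0(\Omega)\to V_{h_n}$ associated with $(\nabla\cdot,\nabla\cdot)_{L^2}$; by $H^2$-regularity one has $\|U_0-P_{h_n}U_0\|_{H^1}\leq Ch_n$ and $\|U_0-P_{h_n}U_0\|_{L^2}\leq Ch_n^2$. Setting $e_{h_n}:=U_{h_n}-P_{h_n}U_0\in V_{h_n}$ and subtracting the weak forms of \eqref{Lane-Emden} and \eqref{Euler-Lagrange-fem} tested against $e_{h_n}$, Ritz orthogonality yields
\[
\|\nabla e_{h_n}\|_{L^2}^2=\int_\Omega\bigl(|U_{h_n}|^{p-2}U_{h_n}-|U_0|^{p-2}U_0\bigr)\,e_{h_n}\,dx,
\]
and a mean-value bound plus the uniform $L^\infty$-bound from (iii) controls the right-hand side by $C\|U_{h_n}-U_0\|_{L^2}\|e_{h_n}\|_{L^2}$. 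Using the splitting $U_{h_n}-U_0=e_{h_n}+(P_{h_n}U_0-U_0)$, together with coercivity of the linearized operator $-\Delta-(p-1)|U_0|^{p-2}\cdot$ on the $L^p$-tangent hyperplane at $U_0$ (which follows because $U_0$ is a constrained minimizer of \eqref{eq-0-2}) and a Lagrange-multiplier-type decomposition for the complementary radial direction, one closes the $H^1$-estimate at rate $O(h_n)$. A duality argument of Aubin--Nitsche type applied to $-\Delta w=e_{h_n}$ on $\Omega$, with the semilinear contributions controlled via the $L^\infty$-bound, then delivers the $L^2$-rate $O(h_n^2)$.
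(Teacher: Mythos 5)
Your part (i) is essentially the paper's argument (competitor in $V_h$ built from $U_0$, convergence of the discrete best constants, compactness, identification of the limit, upgrade to strong $H^1_0$ convergence), so no issue there. The problems are in (ii) and (iii). For (iii) you do not actually give a proof: the contradiction framing adds nothing (the uniform $L^q$ bound on $|U_{h_n}|^{p-2}U_{h_n}$ already follows from the $H^1_0$ bound), and the whole content is the propagation step, which you only name as ``discrete $L^\infty$-stability'' or ``discrete Moser iteration.'' Neither is routine: the maximum-norm stability of the $P_1$ Ritz projection in two dimensions carries a $|\log h|$ factor, so by itself it does not yield a bound uniform in $h$, and a discrete Moser iteration is delicate because truncated powers of $U_h$ are not in $V_h$. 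The paper's proof is concrete and different: $U_h$ is the Ritz projection of $v_h:=(-\Delta)^{-1}(|U_h|^{p-2}U_h)$, the $W^{1,q}$-stability of the Ritz projection (Proposition \ref{approx}) together with the Green-function gradient bound and the Hardy--Littlewood--Sobolev inequality gives $\|U_h\|_{W^{1,6}}\leq C\|U_h\|_{H^1_0}^{p-1}$, and $W^{1,6}(\Omega)\hookrightarrow L^\infty(\Omega)$ in 2D concludes. Note also that you make (ii) depend on (iii), so this gap propagates into (ii); the paper proves (ii) without any $L^\infty$ information, using only 2D Sobolev embeddings and the elementary expansion in Lemma \ref{estimates}.

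Within (ii) itself two steps would fail as written. First, coercivity of $-\Delta-(p-1)U_0^{p-2}$ on the tangent hyperplane does not ``follow because $U_0$ is a constrained minimizer'': minimality only gives nonnegativity of the second variation, while the strict bound \eqref{non-deg-ineq} requires the uniqueness/non-degeneracy theorem for the Lane--Emden minimizer on convex planar domains (Proposition \ref{non-deg}, due to Lin); moreover the control of the radial component, which you dispose of with ``a Lagrange-multiplier-type decomposition \dots closes the $H^1$-estimate,'' is exactly the delicate part of the paper's Section 3, where $|\lambda_h|\leq C\bigl(h+\|v_h\|_{H^1_0}^{\min\{p-1,2\}}\bigr)$ is extracted by testing the discrete equation against an approximation of $U_0$ and expanding $(1+\lambda_h)^{p-1}-(1+\lambda_h)$. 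Second, the Aubin--Nitsche duality with $-\Delta w=e_{h_n}$ does not close: after passing to the discrete equation, the linear contribution $(p-1)\int_\Omega U_0^{p-2}(U_h-U_0)\,w\,dx$ is of size comparable to $\|U_h-U_0\|_{L^2}\|w\|_{L^2}\sim\|U_h-U_0\|_{L^2}^2$, with a constant that cannot be absorbed into the left-hand side; the $L^\infty$ bound makes it bounded, not small. This is precisely why the paper dualizes with the linearized operator $\mathcal{L}=-\Delta-(p-1)U_0^{p-2}$, solving $\mathcal{L}[w_h]=U_h-U_0$ with $\|w_h\|_{H^2}\leq C\|U_h-U_0\|_{L^2}$ (again resting on non-degeneracy via the Fredholm alternative), so that the dangerous first-order term cancels identically and only quadratic remainders, handled by Lemma \ref{estimates}, remain. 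As it stands, your $L^2$ rate is not established.
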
 Also, it is worth to mention that there has been research to develop numerical scheme to find solutions to the nonlinear problem \eqref{Lane-Emden} (see \cite{CM, LZ, FJ} and references therein). The scheme based on mountain pass principle was developed by Choi-McKenna \cite{CM} to find a minimizer and it was extended by Li-Zhou \cite{LZ} to find multiple solutions. In \cite{FJ}, Faou and J\'ezquel  proved the exponential convergence rate for the normalized gradient algorithm for the nonlinear Schr\"odinger equation. Up to the author's best knowledge, there is no result on the convergence estimate between the solution to \eqref{Lane-Emden} and the finite element solution of the  discrete problem \eqref{Euler-Lagrange-fem}. Theorem \ref{thm-1} gives the corresponding estimate for two dimsional convex polygon. The key part of the proof of Theorem \ref{thm-1} is to use the non-degenaracy property of the minimizer. For this part, we modified some ideas in our previous work \cite{CHS} where we studied the convergence estimate for the nonrelativistic limit of the nonlinear pseudo-relativisitic equations.

The rest of the paper is organized as follows. Section 2 is devoted to prove $H^1$ convergence of a approximate solution $U_h$. In Sections 3 and 4, we shall obtain the convergence rates of $U_h$ in $H^1$ and $L^2$ respectively. In Section 5, we prove the uniform $L^\infty$ boundedness of $U_h$.   
It is shown in Section 6 that there is a  good agreement between our analytic results and the real numerical implementation.
The finial section is an appendix which collects useful analytic tools frequently invoked in preceding sections.

\section{Convergence of $U_h$ in $H^1_0$ space}
In this section, we prove the $H^1$ convergence of $U_h$ through several steps.
We recall that
\[
C(\Omega,p) = \min_{v \in H_0^1 (\Omega)\setminus\{0\}} \frac{\|v\|_{H_0^1 (\Omega)}}{\|v\|_{L^p(\Omega)}} 
\quad \textrm{and}\quad 
C_h(\Omega,p) =\min_{v \in V_h \setminus\{0\}}\frac{\|v\|_{H_0^1 (\Omega)}}{\|v\|_{L^p(\Omega)}},
\]
where we imposed the norm $\|\nabla\cdot\|_{L^2(\Omega)}$ on $H^1_0(\Omega)$.
We simply denote $C(\Omega,p)$ and $C_h(\Omega,p)$ by $C_0$ and $C_h$ respectively.

\begin{step}
The value $C_h$ converges to $C_0$ as $h \to 0$. 
\end{step}
\begin{proof}
Since $V_h \subset H^1_0(\Omega)$, one has $C_0 \leq C_h$.
From Proposition \ref{approx} and Proposition \ref{regularity}, we can choose some $\psi_h \in V_h$ satisfying
$\|U_0-\psi_h\|_{H_0^1 (\Omega)} \leq Ch$ for some $C > 0$ independent of $h$.
Then we see that for small $h > 0$,
\begin{equation*}
\begin{aligned}
C_0 &= \frac{\|U_0\|_{H_0^1 (\Omega)}}{\|U_0\|_{L^{p} (\Omega)}} 
\geq \frac{\|\psi_h\|_{H_0^1 (\Omega)} - Ch}{\|\psi_h\|_{L^{p}(\Omega)}+ Ch} \\
&\geq \frac{\|\psi_h\|_{H_0^1 (\Omega)}}{\|\psi_h\|_{L^{p}(\Omega)}} 
- C\frac{\|\psi_h\|_{H_0^1 (\Omega)}+\|\psi_h\|_{L^2 (\Omega)}}{\|\psi_h\|_{L^2(\Omega)}^2}h \\
&\geq C_h + O(h),
\end{aligned}
\end{equation*}
which shows that $\lim_{h\to0}C_h = C_0$.
\end{proof}

\begin{step}
For any sequence $\{h_n\} \to 0$, $\{U_{h_n}\}$ converges in $H^1_0(\Omega)$ to some nonzero function $W_0 \in H^1_0(\Omega)$ after choosing a subsequence. 
\end{step}
\begin{proof}
By the above Step 1, note that for small $h > 0$,
\begin{equation}\label{eq-31}
C_0 \leq \frac{\|U_h\|_{H_0^1 (\Omega)}}{\|U_h\|_{L^{p} (\Omega)}} \leq C_0+1.
\end{equation}
By setting $\phi_h = U_h$ in \eqref{Euler-Lagrange-fem}, we get
\begin{equation}\label{lem-eq-0}
\int_{\Omega} |\nabla U_h|^2 dx = \int_{\Omega} U_h^{p} dx.
\end{equation}
Combining this with \eqref{eq-31}, we obtain that for small $h > 0$,
\begin{equation}\label{lem-eq-1}
C_0 < \|U_h\|_{L^p(\Omega)}^{\frac{p}{2}-1}, \quad  \|U_h\|_{H^1_0(\Omega)}^{1-\frac{2}{p}} < C_0+1.
\end{equation}
The second inequality of \eqref{lem-eq-1} and the compactness of the embedding $H^1_0\hookrightarrow L^p$
says that for any $\{h_n\} \to 0$, $\{U_h\}$ converges to some $W_0$ weakly in $H^1_0$ and strongly in $L^p$ after choosing a subsequence. 
From the first inequality in \eqref{lem-eq-1}, we then deduce that $W_0$ is nonzero. 
Moreover, we see from Proposition \ref{approx} that there exists a sequence $\psi_{h_n} \in V_{h_n}$ such that 
$\|W_0-\psi_{h_n}\|_{H^1_0} = o(1)$ so one has
\begin{equation}\label{lem-eq-2}
\begin{aligned}
\|\nabla W_0\|_{L^2}^2 &= \lim_{n\to\infty}\int_{\Omega}\nabla U_{h_n}\cdot\nabla W_0\,dx \\
&=\lim_{n\to\infty}(\int_{\Omega}\nabla U_{h_n}\cdot\nabla \psi_{h_n}\,dx +\int_{\Omega}\nabla U_{h_n}\cdot\nabla(W_0-\psi_{h_n})\,dx) \\
&=\lim_{n\to\infty}(\int_{\Omega}|U_{h_n}|^{p-2}U_{h_n}\psi_{h_n}\,dx+o(1)) \\
&=\lim_{n\to\infty}(\int_{\Omega}|U_{h_n}|^{p-2}U_{h_n}W_0\,dx+o(1))
= \|W_0\|_{L^p}^p.
\end{aligned}
\end{equation}
Then, the equality \eqref{lem-eq-0} implies that
\[
\|\nabla U_{h_n}\|_{L^2}^2 = \lim_{n\to\infty}\|U_{h_n}\|_{L^p}^p  = \|W_0\|_{L^p}^p = \|\nabla W_0\|_{L^2}^2.
\]
From this and the fact that $\{U_{h_n}\}$ converges weakly to $W_0$, we conclude that the sequence $\{U_{h_n}\}$ strongly converges to $W_0$ in $H_0^1 (\Omega)$. 
\end{proof}

\begin{step} 
The function $W_0$ is either $U_0$ or $-U_0$.
\end{step}
\begin{proof}
Fix an arbitrary $\psi \in H^1_0(\Omega)$. Then by choosing $\psi_{h_n} \in V_{h_n}$ satisfying $\|\psi-\psi_{h_n}\|_{H^1_0} = o(1)$ and using the same arguments in \eqref{lem-eq-2}, we can deduce 
\[
\int_{\Omega} \nabla W_0\cdot \nabla \psi \,dx = \int_{\Omega}  |W_0|^{p-2}W_0\psi \,dx,
\]
which means that $W_0$ is a weak solution of \eqref{Lane-Emden}.
Since $\{U_{h_n}\} \to W_0$ in $H^1_0$ and $C_{h_n} \to C_0$, we see that 
\[
C_0 = \frac{\|W_0\|_{H^1_0(\Omega)}}{\|W_0\|_{L^p(\Omega)}}
\]
so $W_0$ is also a minimizer of the problem \eqref{eq-0-2}.
From Proposition \ref{non-deg}, we then conclude that $W_0$ is either $U_0$ or $-U_0$.                
\end{proof}

\section{$H^1$ error estimates}
\setcounter{step}{0}
In this section, we compute a sharp $H^1$ convergence rate for $U_h$. 
Choose a sequence $\{h_n\} \to 0$ and a sequence of minimizers $\{U_{h_n}\} \subset V_{h_n}$ of \eqref{eq-0-1} with $h = h_n$ such that
$\lambda_{h_n} = 1$ in \eqref{Euler-Lagrange-fem} and $U_{h_n} \to U_0$ in $H^1_0(\Omega)$, where $U_0$ is a unique positive solution of \eqref{Lane-Emden}.
For notational simplicity, we denote $h_n$ by just $h$.
We divide the proof into the several steps. 
The following elementary estimates will be frequently invoked throughout this section.
\begin{lem}\label{estimates}
For $p > 2$, there exists $C > 0$ independent of $a, b$ such that
\[
\left| |b|^{p-2}b-|a|^{p-2}a \right| \leq  C(|b|^{p-2}+|a|^{p-2})|b-a| 
\]
and
\[
\left||b|^{p-2}b-|a|^{p-2}a-(p-1)|a|^{p-2}(b-a) \right| \leq
\left\{
\begin{array}{rcl}
C(|b|^{p-3}+|a|^{p-3})|b-a|^2 & \text{if} & p \geq 3, \\
C|b-a|^{p-1}& \text{if} &  2< p < 3.
\end{array}\right.
\]
\end{lem}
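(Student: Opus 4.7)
The plan is to set $f(t) := |t|^{p-2}t$, so that $f'(t) = (p-1)|t|^{p-2}$ and, when $p \geq 3$, $f''(t) = (p-1)(p-2)|t|^{p-3}\mathrm{sgn}(t)$, and then reduce both inequalities to elementary bounds via the mean value theorem and Taylor's formula with integral remainder.

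For the first inequality I would apply the mean value theorem to $f$ on the interval with endpoints $a$ and $b$, obtaining $f(b) - f(a) = (p-1)|c|^{p-2}(b-a)$ for some $c$ between $a$ and $b$. Since $p \geq 2$, the function $t \mapsto |t|^{p-2}$ is monotone in $|t|$, so $|c|^{p-2} \leq \max(|a|,|b|)^{p-2} \leq |a|^{p-2} + |b|^{p-2}$, which yields the claim immediately. For the second inequality in the easy regime $p \geq 3$, Taylor's formula with integral remainder gives
\[
f(b) - f(a) - (p-1)|a|^{p-2}(b-a) = \int_a^b f''(s)(b-s)\,ds,
\]
and the pointwise bound $|f''(s)| \leq C(|a|^{p-3}+|b|^{p-3})$ on the segment (valid because $p-3 \geq 0$), combined with $\bigl|\int_a^b(b-s)\,ds\bigr| = (b-a)^2/2$, produces the desired estimate.

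The main technical point is the case $2 < p < 3$, in which $f''$ is unbounded near the origin and the above Taylor bound cannot be applied uniformly. Here I would split into two subcases according to the size of $|a|$ relative to $|b-a|$. If $|a| \leq 2|b-a|$ then also $|b| \leq |a|+|b-a| \leq 3|b-a|$, so the crude pointwise bounds $|f(t)| = |t|^{p-1}$ and $|f'(a)(b-a)| = (p-1)|a|^{p-2}|b-a|$ together with the triangle inequality show that each of the three terms on the left side of the second inequality is controlled by $C|b-a|^{p-1}$. If instead $|a| > 2|b-a|$ then $a$ and $b$ have the same sign and $|s| \geq |a|/2$ throughout the segment, so $f''$ is smooth there and the Taylor formula still applies, producing a bound of the form $C|a|^{p-3}(b-a)^2$; since $|a| > 2|b-a|$ and $p-3 < 0$ give $|a|^{p-3} \leq 2^{p-3}|b-a|^{p-3}$, this combines into $C|b-a|^{p-1}$. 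I expect this case split for $2 < p < 3$ to be the only delicate point; everything else is routine calculus.
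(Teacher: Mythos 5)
Your proof is correct. Note that the paper itself states Lemma \ref{estimates} without proof, treating it as an elementary estimate, so there is no argument in the text to compare against; your mean value theorem bound for the first inequality, the Taylor formula with integral remainder for $p\geq 3$, and in particular the case split $|a|\leq 2|b-a|$ versus $|a|>2|b-a|$ for $2<p<3$ (where $f''$ blows up at the origin) together give a complete and standard justification of exactly what the paper uses. The only point worth making explicit in a write-up is the borderline case $p=3$, where $f''(s)=2\,\mathrm{sgn}(s)$ is only defined almost everywhere; since $f'(t)=(p-1)|t|^{p-2}$ is locally Lipschitz for $p\geq 3$, the integral-remainder identity still holds, so your argument goes through unchanged.
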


\begin{step}
There exists a constant $C >0$ independent of $h$  such that
\begin{equation}\label{eq-2-1}
\int_{\Omega} |\nabla (U_h-U_0)|^2 - (p-1)U_0^{p-2} (U_h-U_0)^2 \, dx \leq  Ch\|U_h-U_0\|_{H^1_0(\Omega)} 
+ C\|U_h - U_0\|_{H^1_0(\Omega)}^{\min\{3,p\}}
\end{equation}
\end{step}
\begin{proof}
We recall that 
\begin{equation}\label{eq-33}
\left\{\begin{array}{rcl}
\int_{\Omega} \nabla U_0 \nabla \phi\, dx & = & \int_{\Omega} U_0^{p-1}\phi \, dx \quad \forall ~\phi \in H_0^1 (\Omega), \\
\int_{\Omega} \nabla U_h \nabla \phi_h \, dx & = & \int_{\Omega} |U_h|^{p-2}U_h \phi_h\, dx\quad \forall~ \phi_h \in V_h.
\end{array}\right.
\end{equation}
Then for all $\phi_h \in V_h$,
\begin{equation}\label{eq-32}
\int_{\Omega} \nabla (U_h-U_0) \cdot \nabla \phi_h \,dx = \int_{\Omega} (|U_h|^{p-2}U_h-U_0^{p-1}) \phi_h \, dx.
\end{equation}
From Proposition \ref{approx} and Proposition \ref{regularity}, we see that there exists some $\psi_h \in V_h$ such that $\|\psi_h - U_0\|_{H_0^1} \leq Ch$,
where $C$ depends only on $\Omega$ and $U_0$. Since $U_h \to U$ in $H^1_0(\Omega)$, we may assume $\|U_h-U_0\|_{H^1_0(\Omega)} \leq 1$.
Choosing $\phi_h = U_h-\psi_h$ and using \eqref{eq-32}, we get that
\begin{equation}\label{eq-2-2}
\begin{aligned}
&\int_{\Omega} \nabla (U_h-U_0) \cdot \nabla (U_h-U_0) \, dx - \int_{\Omega} (|U_h|^{p-2}U_h- U_0^{p-1}) (U_h-U_0) \,dx \\
& = \int_{\Omega}\nabla(U_h-U_0)\cdot\nabla(\psi_n-U_0)\,dx- \int_{\Omega} (|U_h|^{p-2}U_h- U_0^{p-1}) (\psi_h-U_0) \,dx. \\
\end{aligned}
\end{equation}
%We note from the Taylor expansion that there exists a function $t_h(x) \in (0, 1)$ such that 
%\begin{equation}\label{expansion}
%\begin{aligned}
%|U_h|^{p-2}U_h & = U_0^{p-1} + (p-1)U_0^{p-2} (U_h - U_0) \\
% &+ \frac{(p-1)(p-2)}{2}|t_hU_h+(1-t_h)U_0|^{p-4}(t_hU_h+(1-t_h)U_0)(U_h-U_0)^2
%\end{aligned}
%\end{equation}
Using Lemma \ref{estimates}, H\"older inequality and Sobolev embedding, we see that
\begin{equation}\label{ineq-step1-0}
\begin{aligned}
&\left|\int_{\Omega}\nabla(U_h-U_0)\cdot\nabla(\psi_n-U_0)\,dx- \int_{\Omega} (|U_h|^{p-2}U_h- U_0^{p-1}) (\psi_h-U_0) \,dx\right| \\
&\leq \|\nabla (U_h-U_0)\|_{L^2}\|\nabla(\psi_h-U_0)\|_{L^2}  + C\int_{\Omega} (|U_h|^{p-2}+U_0^{p-2} )|U_h - U_0||\psi_h-U_0|\,dx \\
&\leq Ch\|U_h-U_0\|_{H^1_0} +C(\|U_h\|_{L^p}^{p-2}+\|U_0\|_{L^p}^{p-2})\|U_h-U_0\|_{L^p}\|\psi_h-U_0\|_{L^p} \\
&\leq Ch\|U_h-U_0\|_{H^1_0}.
\end{aligned}
\end{equation}

%&\leq  +\||U_h|^{p-2}U_h- U_0^{p-1}\|_{L^{p/(p-1)}}\|\psi_h-U_0\|_{L^p} \\
%&\leq C\|U_h-U_0\|_{H^1}\|\psi_h-U_0\|_{H^1} +C\|U_h-U_0\|_{H^1}^{p-1}\|\psi_h-U_0\|_{H^1}  \\
%&\leq Ch\|U_h-U_0\|_{H^1}.

Define
\[
I := \int_{\Omega} (|{U_h}|^{p-2}U_h -{U_0}^{p-1}) (U_h-U_0) \,dx - \int_{\Omega} (p-1)U_0^{p-2} (U_h - U_0)^2\, dx.
\]
Now we see from Lemma \ref{estimates} that $I$ satisfies that if $p \geq 3$, then
\[
\begin{aligned}
|I| &\leq C\int_{\Omega}(|U_h|^{p-3}+U_0^{p-3})|U_h-U_0|^3\,dx \\
& \leq C(\|U_h\|_{H^1_0}^{p-3}+\|U_0\|_{H^1_0}^{p-3})\|U_h-U_0\|_{H^1}^3 \leq C\|U_h-U_0\|_{H^1_0}^3,
\end{aligned}
\]
and if $2<p<3$, then 
\[
|I| \leq C\int_{\Omega}|U_h-U_0|^{p}\,dx \leq C\|U_h-U_0\|_{H^1_0}^p.
\]
Inserting this and \eqref{ineq-step1-0} into \eqref{eq-2-2} we find 
\[
\int_{\Omega} |\nabla (U_h-U_0)|^2 - (p-1)U_0^{p-2}(U_h-U_0)^2 \, dx \leq  C h \|U_h-U_0\|_{H^1} +C\|U_h -U_0\|_{H^1}^{\min\{3,p\}},
\]
which shows the proof.
\end{proof}

\begin{step}
There exists a constant $C >0$ independent of $h$ such that
\begin{equation*}
\|U_h -U_0\|_{H_0^1 (\Omega)} \leq Ch.
\end{equation*}
\end{step}
\begin{proof}
We decompose the difference $U_h - U_0$ as the sum of the part tangential to $U_0$ and the part orthogonal to $U_0$. In other words, we choose a constant $\lambda_h \in \R$
and a function $v_h \in H^1_0(\Omega)$ such that 	
\begin{equation}\label{decomposition}
U_h -U_0 = v_h + \lambda_h U_0\quad \text{and} \quad \langle v_h, U_0\rangle_{H^1_0} = 0.
\end{equation}
Observe that
\begin{equation}\label{orthogonality}
0 = \langle v_h, U_0\rangle_{H^1_0} = \int_{\Omega}\nabla v_h \cdot \nabla U_0\,dx = \int_{\Omega} v_h  (-\Delta U_0)\,dx = \int_{\Omega} v_h  U_0^{p-1}\,dx.
\end{equation}
Since $\|v_h\|_{H^1}^2 + \lambda_h^2 \|U_0\|_{H^1}^2 = \|U_h-U_0\|_{H^1}^2 \to 0$, we see that $\|v_h\|_{H^1}, \lambda_h \to 0$.  
In particular we may assume $\|v_h\|_{H^1} < 1, |\lambda_h| < 1$.

We insert \eqref{decomposition} in the left hand side of \eqref{eq-2-1} and use \eqref{orthogonality} to get 
\begin{equation}\label{ineq0}
\begin{split}
& \int_{\Omega} |\nabla (v_h + \lambda_h U_0)|^2 - (p-1) U_0^{p-2} (v_h+\lambda_h U_0)^2 dx 
\\
& = \int_{\Omega} |\nabla v_h|^2 - (p-1)U_0^{p-2} v_h^2\,dx + \lambda_h^2 \int_{\Omega} |\nabla U_0|^2 - (p-1) U_0^{p} \, dx
\\
& = \int_{\Omega} |\nabla v_h|^2 - (p-1)U_0^{p-2} v_h^2\,dx- (p-2) \lambda_h^2 \int_{\Omega} U_0^p dx.
\end{split}
\end{equation}
Then combining \eqref{eq-2-1}, \eqref{ineq0} and Proposition \ref{non-deg}, we get
\[
\int_{\Omega} |\nabla v_h|^2 dx \leq C\int_{\Omega} |\nabla v_h|^2 - (p-1)U_0^{p-2} v_h^2\,dx
\leq C\lambda_h^2 + Ch\|U_h-U_0\|_{H^1_0}+ C\|U_h-U_0\|_{H^1_0}^{\min\{3,p\}}.
\]
Thus, using Young's inequality, we have
\[
\begin{split}
\|v_h\|_{H^1_0}^2 &\leq C \lambda_h^2 +  C h\left( \|v_h\|_{H^1_0} + \lambda_h\right)+C \left( \|v_h\|_{H^1_0}^{\min\{3,p\}}+ \lambda_h^{\min\{3,p\}} \right)
\\
&\leq C \lambda_h^2 +\frac12\left( \|v_h\|_{H^1_0}^2+\lambda_h^2 \right)+ C \left( \|v_h\|_{H^1_0}^{\min\{3,p\}} + \lambda_h^{\min\{3,p\}} \right)  + C h^2,
\end{split}
\]
which can be simplified as
\begin{equation}\label{eq-2-3}
\|v_h\|_{H^1_0}^2 \leq C \left( \lambda_h^2 + \|v_h\|_{H^1_0}^{\min\{3,p\}}+ h^2\right)  
\end{equation}

On the other hand, the second equality of \eqref{eq-33} is written as, for all $\phi_h \in V_h$,
\begin{equation}\label{ineq1}
\int_{\Omega} \nabla ((1+\lambda_h)U_0 + v_h) \cdot \nabla \phi_h \, dx = \int_{\Omega} |(1+\lambda)U_0 + v_h|^{p-2}((1+\lambda_h)U_0 + v_h)\phi_h\, dx
\end{equation}
We again take $\phi_h \in V_h$ such that $\|U_0-\phi_h\|_{H_0^1 (\Omega)} \leq Ch$. Then  arguing similarly as in Step 1, one has 
\begin{equation}\label{ineq2}
\begin{split}
&\int_{\Omega} \nabla ((1+\lambda_h)U_0 + v_h) \cdot \nabla \phi_h \, dx \\
&= (1+\lambda_h)\int_{\Omega} \nabla U_0 \cdot \nabla U_0 \, dx +\int_{\Omega} \nabla ((1+\lambda_h)U_0 + v_h) \cdot \nabla (\phi_h-U_0) \, dx \\
&= (1+\lambda_h)\int_{\Omega} |\nabla U_0|^2\, dx + O(h)
\end{split}
\end{equation}
and
\begin{equation}\label{ineq3}
\begin{split}
&\int_{\Omega} |(1+\lambda_h)U_0 + v_h|^{p-2}((1+\lambda_h)U_0 + v_h)\phi_h\, dx \\
&=\int_{\Omega} |(1+\lambda_h)U_0 + v_h|^{p-2}((1+\lambda_h)U_0 + v_h)U_0\, dx + O(h) \\
&= (1+\lambda_h)^{p-1}\int_{\Omega}U_0^p\,dx +(p-1)(1+\lambda_h)^{p-2}\int_{\Omega}U_0^{p-1}v_h\,dx +II +O(h) \\
&= (1+\lambda_h)^{p-1}\int_{\Omega}U_0^p\,dx +II +O(h),
\end{split}
\end{equation}
where we defined
\[
\begin{aligned}
II:= &\int_{\Omega} |(1+\lambda_h)U_0 + v_h|^{p-2}((1+\lambda_h)U_0 + v_h)U_0\, dx \\
&\qquad-(1+\lambda_h)^{p-1}\int_{\Omega}U_0^p\,dx -(p-1)(1+\lambda_h)^{p-2}\int_{\Omega}U_0^{p-1}v_h\,dx \\
=&\int_{\Omega} |(1+\lambda_h)U_0 + v_h|^{p-2}((1+\lambda_h)U_0 + v_h)U_0\, dx-(1+\lambda_h)^{p-1}\int_{\Omega}U_0^p\,dx.
\end{aligned}
\]
Then using Lemma \ref{estimates} again, we see that
\begin{equation}\label{ineq4}
|II| \leq C\int_{\Omega}(U_0^{p-3}+|v_h|^{p-3})U_0v_h^2\,dx 
\leq C(\|U_0\|_{L^p}^{p-3}+\|v_h\|_{L^p}^{p-3})\|U_0\|_{L^p}^p\|v_h\|_{L^p}^2 \leq C\|v_h\|_{H^1_0}^2
\end{equation}
if $p \geq 3$ and
\begin{equation}\label{ineq5}
|II| \leq C\int_{\Omega}U_0v_h^{p-1}\,dx
\leq C\|U_0\|_{L^p}^p\|v_h\|_{L^p}^{p-1} \leq C\|v_h\|_{H^1_0}^{p-1}
\end{equation}
Combining \eqref{ineq1}--\eqref{ineq5}, we have
\[
\begin{aligned}
\left|((1+\lambda_h)^{p-1}-(1+\lambda_h))\right|\int_{\Omega}|\nabla U_0|^2\,dx  &\leq  \left|(1+\lambda_h)^{p-1}\int_{\Omega}U_0^p\,dx-(1+\lambda_h)\int_{\Omega}|\nabla U_0|^2\,dx\right| \\
&\leq C(h +\|v_h\|_{H^1_0}^{\min\{p-1,2\}}),
\end{aligned}
\]
which simplifies to
\[
(1+\lambda_h)^{p-2}-1 \leq C(h+\|v_h\|_{H^1_0}^{\min\{p-1,2\}}).
\]
Invoking mean value theorem, there exists some $\xi_h$ between $0$ and $\lambda_h$ such that
\[
(1+\lambda_h)^{p-2}-1 = (p-2)(1+\xi_h)^{p-3}\lambda_h,
\]
from which we see that
\[
|\lambda_h| \leq \frac{C}{(p-2)|1+\xi_h|^{p-3}}(h+\|v_h\|_{H^1_0}^{\min\{p-1,2\}}) \leq C(h+\|v_h\|_{H^1_0}^{\min\{p-1,2\}}),
\]
because $\xi_h \to 0$. 
Combining this with \eqref{eq-2-3}, we arrive at the following estimate
\begin{equation*}
\begin{aligned}
\|v_h\|_{H^1}^2  &\leq C \left( \lambda_h^2 + \|v_h\|_{H^1_0}^{\{3,p\}} + h^2\right) \\
&\leq C\left( h^2 + \|v_h\|_{H^1_0}^{\min\{2(p-1),4\}}+ \|v_h\|_{H^1_0}^{\min\{3,p \}} + h^2\right),
\end{aligned}
\end{equation*}
Since $\|v_h\|_{H^1_0(\Omega)} \to 0$ and $p > 2$, this shows
\[
\|v_h\|_{H^1_0(\Omega)}^2 \leq Ch^2 \quad \text{and} \quad \lambda_h^2 \leq Ch^2.
\]
Thus we finally conclude that
\[
\|U_h-U_0\|_{H_{\Omega}^1}^2 = \|v_h\|_{H_{\Omega}^1}^2+\lambda_h^2 \leq Ch^2.
\]
This completes the proof.
\end{proof}

\section{$L^2$ error estimates}
In this section, we prove the $L^2$ error estimate  for $U_h$. 
Choose a sequence $\{h_n\} \to 0$ and a sequence of minimizers $\{U_{h_n}\} \subset V_{h_n}$ of \eqref{eq-0-1} with $h = h_n$ such that
$\lambda_{h_n} = 1$ in \eqref{Euler-Lagrange-fem} and $U_{h_n} \to U_0$ in $H^1_0(\Omega)$, where $U_0$ is a unique positive solution of \eqref{Lane-Emden}.
As in the previous section, we shall denote $h_n$ by just $h$.
Consider the linear operator $\mathcal{L} \colon H^2(\Omega) \to L^2(\Omega)$ defined by
\[
\mathcal{L} := -\Delta -(p-1)U_0^{p-2}, 
\]
which is the linearized operator of the equation \eqref{Lane-Emden} at $U_0$. We prepare a lemma. 
\begin{lem}
For given data $f \in L^2(\Omega)$, there exists a unique solution $w \in H^1_0(\Omega) \cap H^2(\Omega)$ of the problem 
\begin{equation}\label{linear-problem}
\mathcal{L}[w] = f \ \text{ in } \Omega, \quad w  = 0 \ \text{ on } \partial\Omega.
\end{equation}
such that the following estimate holds for some $C > 0$ independent of $f$: 
\begin{equation}\label{elliptic-est}
\|w\|_{H^2(\Omega)} \leq C\|f\|_{L^2(\Omega)}.
\end{equation}

\end{lem}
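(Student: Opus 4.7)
The plan is to combine the Fredholm alternative with the non-degeneracy of $U_0$ to solve the equation uniquely in $H_0^1(\Omega)$, and then to upgrade the regularity to $H^2(\Omega)$ via the elliptic regularity result on convex polygonal domains.

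First, I view $\mathcal{L}$ as a bounded linear operator from $H_0^1(\Omega)$ to $H^{-1}(\Omega)$. Since $U_0 \in H^2(\Omega) \hookrightarrow L^\infty(\Omega)$ in two dimensions (via Proposition \ref{regularity}), the zeroth-order term $(p-1)U_0^{p-2}$ acts boundedly as a multiplier from $H_0^1(\Omega)$ into $L^2(\Omega)$. The compactness of the embedding $H_0^1(\Omega) \hookrightarrow L^2(\Omega)$ then makes this perturbation compact from $H_0^1(\Omega)$ into $H^{-1}(\Omega)$. Since $-\Delta : H_0^1(\Omega) \to H^{-1}(\Omega)$ is an isomorphism, $\mathcal{L}$ is Fredholm of index zero.

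Next, I verify that $\ker \mathcal{L} = \{0\}$ in $H_0^1(\Omega)$. For any $u \in \ker \mathcal{L}$, I decompose $u = \alpha U_0 + v$ with $\langle v, U_0 \rangle_{H_0^1} = 0$; via $-\Delta U_0 = U_0^{p-1}$, this orthogonality is equivalent to $\int_\Omega U_0^{p-1} v\, dx = 0$. A direct computation gives $\mathcal{L}[U_0] = -(p-2)U_0^{p-1}$, so $\mathcal{L}[u] = 0$ rearranges to $\mathcal{L}[v] = (p-2)\alpha U_0^{p-1}$. Testing this against $v$ and using the orthogonality on the right-hand side yields
\[
\int_\Omega \bigl(|\nabla v|^2 - (p-1) U_0^{p-2} v^2\bigr)\, dx = 0.
\]
The strict coercivity of this quadratic form on the orthogonal complement of $U_0$, provided by Proposition \ref{non-deg}, forces $v = 0$, after which $\alpha U_0^{p-1} \equiv 0$ gives $\alpha = 0$. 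By the Fredholm alternative, $\mathcal{L} : H_0^1(\Omega) \to H^{-1}(\Omega)$ is therefore an isomorphism; for $f \in L^2(\Omega) \hookrightarrow H^{-1}(\Omega)$ this produces a unique $w \in H_0^1(\Omega)$ with $\mathcal{L}[w] = f$ and $\|w\|_{H_0^1} \leq C\|f\|_{L^2}$.

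For the $H^2$ estimate, I rewrite the equation as $-\Delta w = f + (p-1) U_0^{p-2} w$. Its right-hand side lies in $L^2(\Omega)$ with norm bounded by $\|f\|_{L^2} + C\|w\|_{L^2} \leq C\|f\|_{L^2}$, where I use $U_0 \in L^\infty(\Omega)$. Since $\Omega$ is a bounded convex polygon, the $H^2$ regularity for the Poisson problem on such domains (Grisvard) yields $w \in H^2(\Omega) \cap H_0^1(\Omega)$ with $\|w\|_{H^2} \leq C\|\Delta w\|_{L^2} \leq C\|f\|_{L^2}$. The main obstacle is establishing the triviality of $\ker \mathcal{L}$, which is where the non-degeneracy input from Proposition \ref{non-deg} is essential; the remainder is standard Fredholm theory combined with the convex-polygon elliptic regularity result.
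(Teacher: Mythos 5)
Your proposal is correct, and its skeleton is the same as the paper's: non-degeneracy of $U_0$ plus the Fredholm alternative give a unique weak solution, and then the equation is rewritten as $-\Delta w = f + (p-1)U_0^{p-2}w$ and Proposition \ref{regularity} (Grisvard's $H^2$ regularity on convex polygons) upgrades this to the $H^2$ bound, exactly as in the paper. The one genuine difference is how the intermediate bound $\|w\|_{H^1_0(\Omega)} \leq C\|f\|_{L^2(\Omega)}$ is obtained. The paper proves it by hand: it tests the equation against $U_0$ to get $\langle w, U_0\rangle_{H^1_0} = \frac{1}{2-p}\int_\Omega f U_0\,dx$, which controls the component of $w$ along $U_0$, and then tests against $w$ itself, using the orthogonal decomposition $w = v + \lambda U_0$ and the coercivity \eqref{non-deg-ineq} of Proposition \ref{non-deg}(iii) to control $v$. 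You instead observe that $\mathcal{L}\colon H^1_0(\Omega)\to H^{-1}(\Omega)$ is a compact perturbation of the isomorphism $-\Delta$, hence Fredholm of index zero, prove triviality of the kernel (re-deriving Proposition \ref{non-deg}(ii) from (iii) via the same orthogonal decomposition, applied to a kernel element rather than to $w$), and then invoke the bounded inverse theorem to get the $H^1_0$ estimate with a constant depending only on $\mathcal{L}$, hence independent of $f$ — which is all the lemma asks. Your route is shorter and softer (the constant is non-explicit, coming from the open mapping theorem rather than from the coercivity constant in \eqref{non-deg-ineq}), and it is more careful than the paper in justifying the Fredholm property itself; the paper's route is more quantitative and reuses the $v + \lambda U_0$ decomposition that drives the $H^1$ error estimate of Section 3, but both arguments are complete and rest on the same two inputs, Propositions \ref{non-deg} and \ref{regularity}.
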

\begin{proof}
By Proposition \ref{non-deg}, the operator $\mathcal{L}$ has no kernel element so by the Fredholm alternative theory,  
there exists a unique solution  $w \in H^1_0 \cap H^2$ of the problem \eqref{linear-problem}.
We multiply the equation \eqref{linear-problem} by $U_0$ and integrate by parts to see
\[
\begin{aligned}
\int_{\Omega}\nabla w \cdot \nabla U_0\,dx & = \int_{\Omega}(p-1)wU_0^{p-1}\,dx +\int_{\Omega}fU_0\,dx \\
&= \int_{\Omega}(p-1)w(-\Delta U_0)\,dx +\int_{\Omega}fU_0\,dx \\
&= (p-1)\int_{\Omega}\nabla w \cdot \nabla U_0\,dx +\int_{\Omega}fU_0\,dx
\end{aligned}
\]
so we have
\begin{equation}\label{linear-problem-eq0}
\langle w, U_0\rangle_{H^1_0} = \int_{\Omega}\nabla w \cdot\nabla U_0\,dx = \frac{1}{2-p}\int_{\Omega}fU_0\,dx
\end{equation}
Now we consider the orthogonal decomposition of $w$ by $w = v +\lambda U_0$ such that $\langle v, U_0 \rangle_{H^1_0} = 0$ and, consequently $\langle v, \mathcal {L}[U_0]\rangle_{L^2} = 0$ holds. 
Then one has from \eqref{linear-problem-eq0} that
\begin{equation}\label{linear-problem-eq1}
|\lambda| =  \left|\langle w, U_0\rangle_{H^1_0}/\|U_0\|_{H^1_0}^2\right| \leq C\|f\|_{L^2}.
\end{equation}
On the other hand, after multiplying \eqref{linear-problem} by $w$ we use the decomposition of $w$ and Proposition \ref{non-deg} to get
\[
\begin{aligned}
\int_{\Omega}f(v+\lambda U_0)\,dx & = \int_{\Omega}\mathcal{L}[v+\lambda U_0](v+\lambda U_0)\,dx \\
& = \int_{\Omega}\mathcal{L}[v]v\,dx +2\lambda\int_{\Omega}\mathcal{L}[U_0]v\,dx  +\lambda^2\int_{\Omega}\mathcal{L}[U_0]U_0\,dx  \\
& \geq C\|v\|_{H^1_0}^2 +(2-p)\lambda^2\int_{\Omega}U_0^p\,dx.
\end{aligned}
\]
Combining this with \eqref{linear-problem-eq1}, we have from the Young's inequality that
\[
\begin{aligned}
\|v\|_{H^1_0}^2 &\leq (\|v\|_{L^2}+C|\lambda|)\|f\|_{L^2}+C|\lambda|^2 \\
&\leq \frac12\|v\|_{L^2}^2 +C\|f\|_{L^2}^2,
\end{aligned}
\]
which shows that $\|v\|_{H^1_0} \leq C\|f\|_{L^2}$ by the Sobolev embedding. 
Since $\|w\|_{H^1_0}^2 = \|v\|_{H^1_0}^2+\lambda^2\|U_0\|_{H^1_0}$, we also get $\|w\|_{H^1_0} \leq C\|f\|_{L^2}$.
Considering the equation 
\[
-\Delta w = (p-1)U_0^{p-2}w+f
\]
and invoking Proposition \ref{regularity}, we finally have
\[
\|w\|_{H^2} \leq C(\|(p-1)U_0^{p-2}w\|_{L^2}+\|f\|_{L^2}) \leq C\|f\|_{L^2}.
\]
This completes the proof. 
\end{proof}
Now we begin the proof of the $L^2$ error estimate of \eqref{eq-16}. Let $w_h \in H^2$ be a unique solution of the problem
\[
\mathcal{L}[w] = U_h-U_0 \ \text{ in } \Omega, \quad w  = 0 \ \text{ on } \partial\Omega
\]
such that the estimate $\|w_h\|_{H^2} \leq C\|U_h-U_0\|_{L^2}$ holds true.
Then one has 
\begin{equation}\label{eq-45}
\begin{split}
\int_{\Omega} (U_h-U_0)^2 dx & = \int_{\Omega}\mathcal{L}[w_h](U_h-U_0)\,dx \\
&= \int_{\Omega}\nabla w_h \cdot \nabla(U_h-U_0)\, dx- (p-1)\int_{\Omega}w_hU_0^{p-2}(U_h-U_0) dx
\end{split}
\end{equation}
Take $\phi_h \in V_h$ satisfying $\|\phi_h-w_h\|_{H^1_0} \leq Ch\|w_h\|_{H^2}$.
Then one must have
\begin{equation*}
\int_{\Omega} \nabla (U_h-U_0) \nabla \phi_h dx = \int_{\Omega} (|U_h|^{p-2}U_h -U_0^{p-1}) \phi_h dx.
\end{equation*}
Combining this with \eqref{eq-45}, and then using Lemma \ref{estimates} and $H^1$ convergence rate of $U_h$ obtained in the previous section, we obtain
\begin{equation*}
\begin{split}
\int_{\Omega} (U_h-U_0)^2\, dx & = \int_{\Omega} \nabla (w_h-\phi_h) \cdot \nabla (U_h-U_0) dx -(p-1)\int_{\Omega}(w_h-\phi_h)U_0^{p-2}(U_h-U_0)\,dx
\\
&\qquad +\int_{\Omega}\left(|U_h|^{p-2}U_h-U_0^{p-1}-(p-1)U_0^{p-2}(U_h-U_0)\right)\phi_h\,dx \\
&\leq \|w_h-\phi_h\|_{H^1_0}\|U_h-U_0\|_{H^1_0} +\|w_h-\phi_h\|_{L^p}\|U_0\|_{L^p}^{p-2}\|U_h-U_0\|_{L^p} \\
&\qquad +\left\{\begin{array}{lcl}
C(\|U_h\|^{p-3}_{L^p}+\|U_0\|^{p-3}_{L^p})\|U_h-U_0\|_{L^p}^2\|\phi_h\|_{L^p} & \text{if} & p \geq 3, \\
C\|U_h-U_0\|_{L^2}^{p-1}\|\phi_h\|_{L^{\frac{2}{3-p}}} & \text{if} &  2 < p < 3, 
\end{array}\right. \\
&\leq \left\{\begin{array}{lcl}
Ch\|w_h-\phi_h\|_{H^1_0} +Ch^2\|\phi_h\|_{H^1_0} & \text{if} & p \geq 3, \\
Ch\|w_h-\phi_h\|_{H^1_0} +C\|U_h-U_0\|_{L^2}^{p-1}\|\phi_h\|_{H^1_0} & \text{if} & 2 < p < 3.
\end{array}\right.
\end{split}
\end{equation*}
From the fact that $\|\phi_h-w_h\|_{H^1_0} \leq Ch\|w_h\|_{H^2}$, we see that $\|\phi_h\|_{H^1_0} \leq C\|w_h\|_{H^2}$, and consequently,
using estimate $\|w_h\|_{H^2} \leq C\|U_h -U_0\|_{L^2}$ from \eqref{elliptic-est}, one has
\[
\int_{\Omega} (U_h-U_0)^2\, dx \leq \left\{\begin{array}{lcl}
Ch^2\|U_h-U_0\|_{L^2}  & \text{if} & p \geq 3, \\
Ch^2\|U_h-U_0\|_{L^2} +C\|U_h-U_0\|_{L^2}^p & \text{if} & 2 < p < 3.
\end{array}\right.
\]
Then we see that in any case the desired $L^2$ convergence rate is obtained.

\section{The uniform $L^{\infty}$ estimate}

This section is devoted to prove the uniform $L^{\infty}$ estimate of $U_h$.
We recall that
\[
\int_{\Omega} (\nabla U_h \cdot \nabla \phi_h) \, dx = \int_{\Omega} |U_h|^{p-2}U_h \phi_h \, dx \quad \forall ~ \phi_h \in V_h.
\]
We define $v_h \in H_0^1 (\Omega)$ as the unique solution of 
\[
-\Delta v = |U_h|^{p-2}U_h \ \text{ in } \Omega, \quad v \in H^1_0(\Omega).
\]
In particular, $v_h$ satisfies 
\[
\int_{\Omega} \nabla v_h \cdot \nabla \phi \, dx = \int_{\Omega} |U_h|^{p-2}U_h \phi \, dx,\quad \forall~ \phi \in H_0^1 (\Omega).
\]
Then one must have
\begin{equation*}
\int_{\Omega} \nabla (U_h - v_h) \nabla \phi dx =0\quad \forall~ \phi \in V_h,
\end{equation*}
which means that $U_h$ is the $H^1$ projection of $v_h$ to the finite element space $V_h$.
Thus we have from Proposition \ref{approx} that
\begin{equation}\label{scott}
\|U_h\|_{W^{1,q}(\Omega)} \leq C_q\|v_h\|_{W^{1,q}(\Omega)}
\end{equation}
as long as the right hand side is finite.
Let $G(x,y)$ denote the Green function of $-\Delta$ on $\Omega$ with the Dirichlet boundary condition. 
Then $v_h$ is given by
\[
v_h (x) = \int_{\Omega} G(x,y) |U_h|^{p-2}U_h (y) dy.
\]
Since we have the following uniform gradient estimate of Green function \cite{Fromm, GW}: 
\[
\left|\nabla_x G(x,y) \right|\leq C\frac{1}{|x-y|},
\]
the Hardy-Littlewood-Sobolev inequality implies that
\[
\|v_h \|_{W^{1,q}(\Omega)} \leq C \| |U_h|^{p-1}\|_{L^r (\Omega)}
\]
for any $q>r>1$ satisfying $\frac{1}{r} - \frac{1}{q} =\frac{1}{2}$. Let us choose $r= 3/2$ and $q = 6$. Then, 
\[
\| v_h \|_{W^{1,6}} \leq C \| |U_h|^{p-1}\|_{L^{3/2}} = \|U_h\|_{L^{3(p-1)/2}}^{p-1}\leq \|U_h\|_{H^1_0}^{p-1}.
\]
We combine this with \eqref{scott} and use the Sobolev embedding to conclude that
\[
\|U_h\|_{L^\infty} \leq C\|U_h\|_{H^1_0}^{p-1}. 
\]
This completes the proof. 

\section{Numerical results}

In the numerical implementation, we computed the approximate solutions in the case $p =4$, $n=2$ and $\Omega = (0,1)^2$. 
Since we do not have an explicit formula for the original solution, we computed the error $\|u_h -u_{h/2}\|_{L^2 (\Omega)}$ and $\|u_h - u_{h/2}\|_{H^1}$, where $h$ is the length of the triangle. We conducted the numeric with $h$ given by $h_j = 2^{-j}$ for $1 \leq j \leq 7$. Since we do not have an explicit form of the exact solution,  we computed the decrease of the error. Namely, for each $2 \leq j \leq 7$, we calculated $R_j^0$ and $R_j^1$ given as
\begin{equation*}
R_j^0 =\log_2 \left( \frac{\|u_{h_j}-u_{h_{j+1}}\|_{L^2}}{\|u_{h_{j-1}}-u_{h_j}\|_{L^2}}\right)\quad \textrm{and}\quad R_j^1 = \log_2 \left( \frac{\|u_{h_j}-u_{h_{j+1}}\|_{H^1}}{\|u_{h_{j-1}}-u_{h_j}\|_{H^1}}\right).
\end{equation*}
To obtained the numerical solution for the nonlinear problem, we iterated combination of the gradient descent method and the $L^{p+1}(\Omega)$  norm normalization: First fix an initial data $u_0 \in L^{p+1}(\Omega)$, and then we iterate the following two steps:
\begin{enumerate}
\item We choose a small value $\eta>0$. Then we consider the gradient descent of the energy function $E(u)$, i.e., 
\begin{equation}
\nabla E(u) = u - (-\Delta)^{-1} (|u|^{p-1}u)
\end{equation}
and substitute $u \rightarrow u - \delta \nabla E(u)$. 
\item Next we normalize the $L^{p+1}(\Omega)$-norm as
\begin{equation}
u \rightarrow \frac{u}{\|u\|_{L^{p+1}(\Omega)}}.
\end{equation}
\end{enumerate}
In the above, to obtain the function $w=(-\Delta)^{-1}(|u|^{p-1} u)$, we computed the approximate function $w_h \in V_h$ such that
\begin{equation}
\int_{\Omega} \nabla w_h \nabla \phi dx = \int_{\Omega} \phi |u|^{p-1} u(x) dx\quad \forall ~\phi \in V_h.
\end{equation}
We chose the domain $\Omega = [0,1]\times [0,1]$ and took the initial data $u_0 \in V_h$ so that $u_0 =1$ on all the interior nodes, and $u_0 = 0$ on the boundary nodes. We then nomalized the $L^{p+1}(\Omega)$ of $u_0$. For the iteration, we took $\eta =0.2$ and iterated the above two steps for $60$ times. We examined two cases $p=3$ and $p=10$.
\begin{figure}[htbp]\includegraphics[height=3.2cm, width=3.2cm]{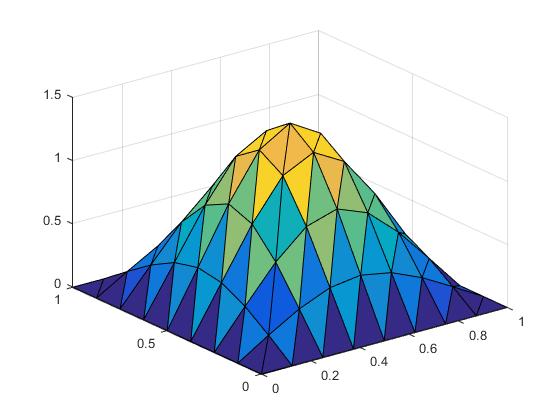}\includegraphics[height=3.2cm, width=3.2cm]{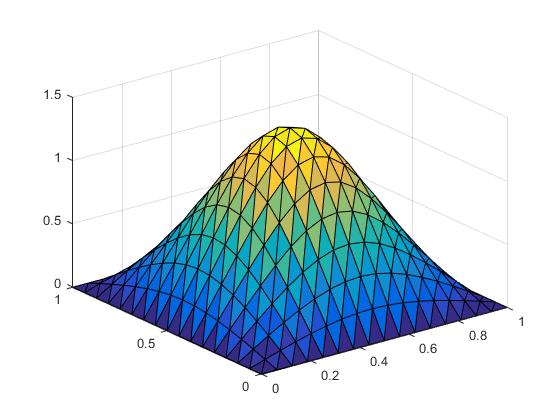}\includegraphics[height=3.2cm, width=3.2cm]{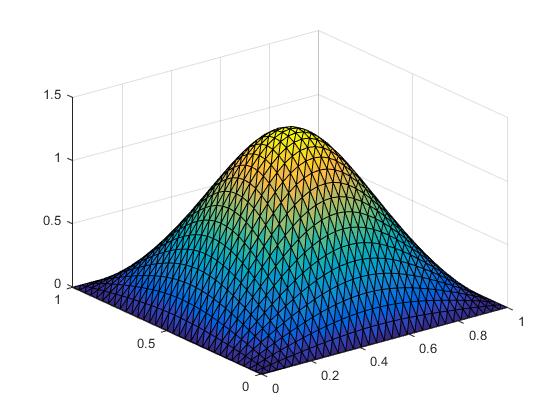}\includegraphics[height=3.2cm, width=3.2cm]{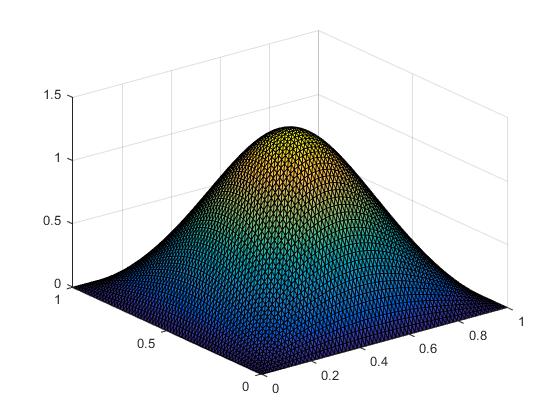}
\vspace{-0.3cm}
\caption{The approximate solutions for $p=3$.}\label{pic}
\end{figure}
\

Figure \ref{pic} shows the solutions with $p=3$ computed with mesh sizes $1/2^3$, $1/2^4$, $1/2^5$, and $1/2^6$.
\begin{table}[htbp]
\begin{center}
\caption{The $L^2$ and $H^1$ errors for the case $p=3$.}\label{tab1}
\vspace{0.2cm}
\begin{tabular}{|c||c c|c c|}
\hline
\vspace{-0.4cm}
 & & & & \\
$h_j$ & \multicolumn{1}{c}{$\|u_{h}-u_{h/2}\|_{L^2}$} & \multicolumn{1}{c|}{$Rate (R_j^0)$}
& \multicolumn{1}{c}{$\|u_{h}-u_{h/2}\|_{H^1}$} & \multicolumn{1}{c||}{$Rate (R_j^1)$}
\\
\hline\hline
\vspace{-0.3cm}
 & & & &  \\
$2^{-1}$ & 4.5500E-01	& - &	2.5190E+00&	-\\
$2^{-2}$ & 7.9379E-02	&2.51&	1.0314E+00&	1.40\\
$2^{-3}$ & 1.9137E-02	&2.05&4.8709E-01&	1.08\\
$2^{-4}$ & 4.9273E-03	&1.95&	2.4000E-01&	1.02\\
$2^{-5}$ & 1.2837E-03 &1.94&	1.1954E-01&	1.01\\
$2^{-6}$ & 3.4450E-04	&1.89&5.9711E-02&	1.00\\
$2^{-7}$ & 9.9473E-05     &1.79&2.9847E-02&	1.00\\
\hline
\end{tabular}
\end{center}
\end{table}

\begin{table}[htbp]
\begin{center}
\caption{The $L^2$ and $H^1$ errors for the case $p=10$.}\label{tab2}
\vspace{0.2cm}
\begin{tabular}{|c||c c|c c|}
\hline
\vspace{-0.4cm}
 & & & & \\
$h_j$ & \multicolumn{1}{c}{$\|u_{h}-u_{h/2}\|_{L^2}$} & \multicolumn{1}{c|}{$Rate (R_j^0)$}
& \multicolumn{1}{c}{$\|u_{h}-u_{h/2}\|_{H^1}$} & \multicolumn{1}{c||}{$Rate (R_j^1)$}
\\
\hline\hline
\vspace{-0.3cm}
 & & & &  \\
$2^{-1}$ & 6.3268E-01	& - &	3.3675E+00&	-\\
$2^{-2}$ & 1.4409E-01	&2.13&	1.0837E+00&	1.60\\
$2^{-3}$ & 4.9285E-02	&1.55&5.7721E-01&	0.91\\
$2^{-4}$ & 1.6337E-02	&1.59&	3.0117E-01&	0.94\\
$2^{-5}$ & 4.7800E-03 &1.77&	1.5087E-01&	1.00\\
$2^{-6}$ & 1.2789E-03	&1.90&7.5277E-02&	1.00\\
$2^{-7}$ & 3.3675E-04     &1.92&3.7605E-02&	1.00\\
\hline
\end{tabular}
\end{center}
\end{table}

 Table \ref{tab1} shows the error of $L^2$ and $H^1$ with the ratios for the case $p=3$, and Table \ref{tab2} shows the correponding errors and ratios for the case $p=10$.
%The following pictures show the procedure of the iteration.
%\begin{figure}[htbp]
%\includegraphics[height=3.2cm, width=3.2cm]{(5,1,4).jpg}  \includegraphics[height=3.2cm, width=3.2cm]{(5,10,4).jpg} 
%\includegraphics[height=3.2cm, width=3.2cm]{(5,20,4).jpg}
%\includegraphics[height=3.2cm, width=3.2cm]{(5,50,4).jpg}
%\label{initial-mesh.fig}
%\vspace{-0.3cm}
%\end{figure}

\newpage
\appendix
\section{Analytic tools}

In the appendix, we arrange some auxiliary tools which are required to handle some analytic issues arising when we prove our main results.

%The following results concerning the approximations by $V_h$  can be found in \cite{Scott}.
\begin{prop}[\cite{Bartels}, \cite{Scott}]\label{approx}
For any $u \in H_0^1(\Omega)$, define $P_h(u)$ by the projection of $u$ to $V_h$ in $H^1_0(\Omega)$.  In other words, $P_h(u)$ is a unique element in $V_h$ satisfying
\[
\int_{\Omega}u\phi_h\,dx = \int_{\Omega} P_h(u)\phi_h\,dx \ \text{ for all } \phi_h \in V_h.
\] 
Then the following estimates hold:
\[
\|u-P_h(u)\|_{H_0^1(\Omega)} = o(1), \quad \text{and} \quad \|u-P_h(u)\|_{L^2(\Omega)} = O (h) \|u\|_{H^1_0(\Omega)}
\quad \text{as } h \to 0.
\]
If $u \in H_0^1(\Omega) \cap H^2(\Omega)$ the following estimates hold:
\[
\|u-P_h(u)\|_{H^1(\Omega)} = O(h) \|u\|_{H^2(\Omega)} \quad \text{and} \quad \|u-P_h(u)\|_{L^2(\Omega)} = O(h^2) \|u\|_{H^2(\Omega)}
\quad \text{as } h \to 0.
\]
If $u \in W^{1,q}_0(\Omega)$ for some $q \geq 2$, the following estimate holds (scott):
\[
\|P_h(u)\|_{W^{1,q}(\Omega)} \leq C\|u\|_{W^{1,q}(\Omega)}
\]
for some $C > 0$ independent of $h$.
\end{prop}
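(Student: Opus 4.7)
The plan is to treat $P_h$ as the Galerkin (Ritz) projection onto $V_h$ in the $H^1_0$-inner product; the integrand in the defining identity is most naturally $\nabla u \cdot \nabla \phi_h$ rather than $u \phi_h$, in keeping with the description ``projection $\ldots$ in $H^1_0(\Omega)$'' and with every subsequent application of this proposition in the paper. From this characterization two elementary facts are at hand: Galerkin orthogonality $\int_\Omega \nabla(u - P_h u) \cdot \nabla \phi_h\, dx = 0$ for all $\phi_h \in V_h$, and the quasi-best approximation inequality $\|u - P_h u\|_{H^1_0} \leq \|u - v_h\|_{H^1_0}$ for every $v_h \in V_h$.

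The main analytic input will be a Scott--Zhang (or Lagrange) interpolant $I_h \colon H^1_0(\Omega) \to V_h$ satisfying the standard local polynomial approximation bounds on a shape-regular triangulation of $\Omega$: for $u \in H^1_0(\Omega) \cap H^2(\Omega)$,
\[
\|u - I_h u\|_{L^2(\Omega)} \leq C h^2 \|u\|_{H^2(\Omega)}, \qquad \|u - I_h u\|_{H^1(\Omega)} \leq C h \|u\|_{H^2(\Omega)}.
\]
Combined with quasi-optimality this at once yields $\|u - P_h u\|_{H^1_0} \leq C h \|u\|_{H^2}$. For the $L^2$ bound in the $H^2$-regularity class I would apply the standard Aubin--Nitsche duality: let $w \in H^1_0(\Omega) \cap H^2(\Omega)$ solve $-\Delta w = u - P_h u$ with $\|w\|_{H^2} \leq C \|u - P_h u\|_{L^2}$; here convexity of the polygonal domain $\Omega$ enters, through the Grisvard $H^2$-regularity theorem for the Poisson equation. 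Testing with $u - P_h u$, integrating by parts, and exploiting Galerkin orthogonality against $I_h w$ yields
\[
\|u - P_h u\|_{L^2}^2 = \int_\Omega \nabla(u - P_h u) \cdot \nabla(w - I_h w)\, dx \leq C h \|u\|_{H^2} \cdot C h \|w\|_{H^2},
\]
and dividing through by $\|u - P_h u\|_{L^2}$ gives the $O(h^2)\|u\|_{H^2}$ bound. The same duality applied \emph{without} using any extra regularity of $u$ gives $\|u - P_h u\|_{L^2} \leq C h \|u - P_h u\|_{H^1_0}$; combined with the trivial energy bound $\|P_h u\|_{H^1_0} \leq \|u\|_{H^1_0}$, this produces the $O(h)\|u\|_{H^1_0}$ estimate for $u \in H^1_0(\Omega)$.

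The remaining first claim, $\|u - P_h u\|_{H^1_0} = o(1)$ for merely $H^1_0$ data, follows by density of $C^\infty_c(\Omega)$ in $H^1_0(\Omega)$. Given $\epsilon > 0$, pick $u_\epsilon \in C^\infty_c(\Omega)$ with $\|u - u_\epsilon\|_{H^1_0} < \epsilon$; then quasi-optimality together with the already-proved $H^2$-case rate gives
\[
\|u - P_h u\|_{H^1_0} \leq \|u - u_\epsilon\|_{H^1_0} + \|u_\epsilon - P_h u_\epsilon\|_{H^1_0} + \|P_h(u_\epsilon - u)\|_{H^1_0} \leq 2 \epsilon + C h \|u_\epsilon\|_{H^2},
\]
and sending $h \to 0$ and then $\epsilon \to 0$ closes the argument.

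The main obstacle is the $W^{1,q}$-stability estimate $\|P_h u\|_{W^{1,q}(\Omega)} \leq C \|u\|_{W^{1,q}(\Omega)}$, which is genuinely harder than the $H^1$-stability used above. It is the classical uniform stability of the Ritz projection in $L^q$-based gradient norms, established by Rannacher--Scott (and by Scott--Zhang), and its proof on convex polygonal domains rests on sharp weighted and pointwise bounds for the discrete Green's function together with an inverse inequality on the quasi-uniform mesh. Rather than reproducing that machinery I would invoke the result directly from the reference cited in the statement, \cite{Scott}, and record it as the one nontrivial black box. It is precisely the input that Section 5's estimate \eqref{scott} needs, and no finer quantitative information about $P_h$ is required elsewhere in the paper.
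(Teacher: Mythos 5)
The paper offers no proof of this proposition --- it is quoted verbatim from the cited references \cite{Bartels}, \cite{Scott} --- so there is no in-paper argument to compare against; your reconstruction is the standard one and is correct (C\'ea quasi-optimality plus nodal/Scott--Zhang interpolation for the $H^1$ rate, Aubin--Nitsche duality with Grisvard's convexity regularity for both $L^2$ rates, density of smooth functions for the $o(1)$ claim, and the Rannacher--Scott $W^{1,q}$ stability left as the one genuine black box, exactly as the paper itself does). You are also right to read the defining identity as the Ritz projection, i.e.\ $\int_\Omega \nabla u\cdot\nabla\phi_h\,dx=\int_\Omega \nabla P_h(u)\cdot\nabla\phi_h\,dx$: the formula as printed defines the $L^2$ projection, which contradicts the phrase ``projection \ldots in $H^1_0(\Omega)$'' and the way the proposition is used in Section 5, where $U_h$ is characterized by $\int_\Omega\nabla(U_h-v_h)\cdot\nabla\phi_h\,dx=0$ and the $W^{1,q}$ stability of the Ritz projection is what is actually invoked in \eqref{scott}.
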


%\begin{equation}
%\min_{\chi \in V_h} \|u-\chi\|_{H^1} = O (h) \|u\|_{H^2}.
%\end{equation}
%\begin{equation}
%\min_{\chi \in V_h} \|u-\chi\|_{L^2(\Omega)} = O (h) \|u\|_{H^1_0(\Omega)}
%\end{equation}

\begin{prop}[\cite{Grisvard}]\label{regularity}
Let $\Omega \subset \R^2$ be a bounded convex domain with a polygonal boundary.
For given $f \in L^2(\Omega)$, let $u \in H^1_0(\Omega)$ be a weak solution of the problem
\[
-\Delta u = f \ \text{ in } \Omega, \quad u \in H^1_0(\Omega)
\]
Then $u$ belongs to $H^2(\Omega)$, and there exists a constant $C>0$ such that 
\begin{equation*}
\|u\|_{H^2 (\Omega)} \leq C\|f\|_{L^2 (\Omega)}.
\end{equation*}
\end{prop}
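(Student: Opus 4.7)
The plan is to decompose the domain via a partition of unity into three regions: an interior piece, pieces along the open edges of the polygon, and neighborhoods of the vertices. Interior $H^2$ regularity on compactly supported cutoffs is standard from Nirenberg-type difference quotient arguments applied to $-\Delta u = f$, giving $u \in H^2_{\mathrm{loc}}(\Omega)$. Along an open edge, away from the vertices, the boundary is a straight segment; after a rigid motion placing this edge on $\{x_2 = 0\}$, one extends $u$ by odd reflection $\tilde u(x_1, -x_2) = -u(x_1, x_2)$, so that $-\Delta \tilde u = \tilde f$ in a full two-sided neighborhood and interior regularity for the reflected problem returns $H^2$ control up to the edge with a bound of the form $\|u\|_{H^2} \leq C(\|f\|_{L^2} + \|u\|_{H^1_0})$.

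The hard part, and the reason convexity enters, is regularity at the vertices. Fix a vertex $V$ with interior angle $\omega$, and work in polar coordinates $(r,\theta)$ centered at $V$ so that the two incident edges lie on $\theta = 0$ and $\theta = \omega$. Separation of variables for $-\Delta$ with homogeneous Dirichlet conditions on the infinite sector produces the Kondrat'ev basis $\{r^{\pi k/\omega}\sin(\pi k\theta/\omega)\}_{k\geq 1}$, and by the Kondrat'ev expansion the solution near $V$ splits as $u = u_{\mathrm{reg}} + \sum_k c_k\, \chi(r)\, r^{\pi k/\omega}\sin(\pi k\theta/\omega)$, where $u_{\mathrm{reg}} \in H^2$ and $\chi$ is a cutoff. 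A direct computation of $|D^2(r^{\pi/\omega}\sin(\pi\theta/\omega))|^2$ in polar coordinates integrates as $r^{2\pi/\omega - 4} \cdot r\, dr\, d\theta$, which is integrable near $r=0$ precisely when $2\pi/\omega - 3 > -1$, i.e., $\omega < \pi$. Convexity of $\Omega$ imposes exactly this condition at every vertex, so each singular term already belongs to $H^2$ near $V$, and hence $u \in H^2$ near each vertex.

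An equivalent route, avoiding explicit corner asymptotics, is to approximate $\Omega$ from inside by a family of smooth convex domains $\Omega_\varepsilon$ and solve the Dirichlet problem $-\Delta u_\varepsilon = f$ on each. On a $C^2$ domain one has the identity
\[
\int_{\Omega_\varepsilon} |\Delta u_\varepsilon|^2 \, dx = \int_{\Omega_\varepsilon} |D^2 u_\varepsilon|^2 \, dx + \int_{\partial \Omega_\varepsilon} \kappa\, |\nabla u_\varepsilon|^2 \, ds,
\]
where $\kappa$ denotes the boundary curvature; convexity of $\Omega_\varepsilon$ forces $\kappa \geq 0$, which delivers $\|D^2 u_\varepsilon\|_{L^2(\Omega_\varepsilon)} \leq \|f\|_{L^2}$ uniformly in $\varepsilon$. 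Weak compactness in $H^2$, uniqueness of the weak Dirichlet problem on $\Omega$, and weak lower semicontinuity of the norm then transfer the bound to the limit solution $u$.

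Either glueing the three regions through a partition of unity (first approach) or passing to the limit (second approach) yields $u \in H^2(\Omega)$ with $\|u\|_{H^2} \leq C\|f\|_{L^2}$. The \emph{main obstacle} is strictly the corner analysis: for a nonconvex reentrant angle $\omega \in (\pi, 2\pi)$ the leading exponent $\pi/\omega$ lies in $(1/2,1)$ and the corresponding singular function fails to be in $H^2$, so the estimate genuinely relies on the convexity hypothesis.
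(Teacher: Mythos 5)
This proposition is not proved in the paper at all: it is quoted verbatim from the cited reference (Grisvard, \emph{Elliptic problems in nonsmooth domains}), so there is no internal proof to compare against. Your outline is correct and, in fact, your second route is essentially Grisvard's own argument: the identity $\int_\Omega|\Delta v|^2\,dx=\int_\Omega|D^2v|^2\,dx+\int_{\partial\Omega}\kappa\,|\partial_\nu v|^2\,ds$ for $v\in H^2\cap H^1_0$ on a smooth domain, nonnegativity of $\kappa$ by convexity, approximation of the convex polygon by smooth convex domains, and passage to the limit. The first route (interior regularity, odd reflection along the straight edges, Kondrat'ev analysis at the vertices with threshold exponent $\pi/\omega>1$ exactly when $\omega<\pi$) is the other standard proof for polygons, and your integrability computation at the corner is right. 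Two small refinements you should make if you were to write this out: in the corner expansion only the finitely many singular functions with exponent $k\pi/\omega\in(0,1)$ obstruct $H^2$ membership (for $f\in L^2$ the remainder is already $H^2$), and for a convex angle this set is empty, so you never need to sum an infinite series of singular terms in $H^2$; and in the domain-approximation argument the limit passage needs the uniform $H^1_0$ bound (test the equation with $u_\varepsilon$ and use Poincar\'e, which also supplies the lower-order part of the claimed bound $\|u\|_{H^2}\le C\|f\|_{L^2}$) together with some care in identifying that the weak $H^2_{\mathrm{loc}}$ limit is the solution on $\Omega$ with the correct zero boundary trace. Neither point is a genuine gap; both are handled in the cited source.
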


\begin{prop}[\cite{CHS}, \cite{Lin}]\label{non-deg}
Let $\Omega \subset \R^2$ be a bounded convex domain and  $p \in (2, \infty)$.
Let $U$ be a minimizer of the problem
\[
C(\Omega, p) = \inf\left\{\frac{\|\nabla u\|_{L^2(\Omega)}}{\|u\|_{L^p(\Omega)}} ~\Big|~ u \in H^1_0(\Omega), u \neq 0\right\}
\]
satisfying 
\begin{equation}\label{non-deg-Lane-Emden}
-\Delta u = |u|^{p-2}u \ \text{ in } \Omega.
\end{equation}
Then there holds the following:
\begin{enumerate}[{\em(i)}]
\item $U$ is sign definite and unique up to a sign. 
\item$U$ is non-degenerate. In other words, the linearized equation of \eqref{non-deg-Lane-Emden} at $U$, i.e.,
\[
\Delta \phi +(p-1)U^{p-2}\phi = 0 \ \text{ in } \Omega, \quad \phi \in H^1_0(\Omega) 
\]
admits only the trivial solution.
\item The following inequality
\begin{equation}\label{non-deg-ineq}
\int_{\Omega} |\nabla \phi|^2 - (p-1) U^{p-2} \phi^2 dx \geq C \int_{\Omega} |\nabla \phi|^2 dx %\quad \forall~ \phi \in H_0^1 (\Omega)
\end{equation}
holds true for any $\phi \in H^1_0(\Omega)$ satisfying $\langle \phi, U\rangle_{H^1_0(\Omega)} = 0$ and some $C > 0$ independent of $\phi$.
\end{enumerate}
\end{prop}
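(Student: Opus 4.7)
The plan is to separate the three assertions and handle (i) and (ii) by appealing to the classical uniqueness/nondegeneracy theorem of Lin for the Lane--Emden equation on convex planar domains, and then to deduce the coercivity (iii) as a consequence of (ii) via a standard variational/compactness argument.

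For (i), I would first observe that if $U$ minimizes the Sobolev quotient, then so does $|U|$, since $\||U|\|_{L^p}=\|U\|_{L^p}$ and $\|\nabla |U|\|_{L^2}\le \|\nabla U\|_{L^2}$. Hence $|U|$ is itself a nonnegative solution of \eqref{non-deg-Lane-Emden}, and the strong maximum principle forces $|U|>0$ in $\Omega$, so $U$ has constant sign. Uniqueness up to sign is Lin's theorem for the Lane--Emden equation on bounded convex planar domains. Part (ii) is also due to Lin: on such a domain, the linearization at the unique positive solution has trivial kernel in $H^1_0(\Omega)$. Nothing substantively new needs to be done here beyond a careful invocation of \cite{Lin}.

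The substantive step is (iii), which I would derive from (ii). The minimality of $U$ in \eqref{eq-0-2} yields, via a Lagrangian second variation computation for the constrained problem $\min\{\|\nabla u\|_{L^2}^2:\|u\|_{L^p}\text{ fixed}\}$, the nonnegativity
\[
\int_\Omega |\nabla \phi|^2-(p-1)U^{p-2}\phi^2\,dx\;\ge\; 0
\]
for every $\phi\in H_0^1(\Omega)$ satisfying the linearized constraint $\int_\Omega U^{p-1}\phi\,dx=0$, which by $-\Delta U=U^{p-1}$ is equivalent to $\langle \phi,U\rangle_{H^1_0}=0$. To upgrade this nonnegativity to the coercive inequality \eqref{non-deg-ineq}, I argue by contradiction: assume a sequence $\phi_n\in H_0^1(\Omega)$ with $\|\nabla \phi_n\|_{L^2}=1$, $\langle \phi_n,U\rangle_{H^1_0}=0$, and left-hand side of \eqref{non-deg-ineq} tending to $0$. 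Passing to a subsequence, $\phi_n\rightharpoonup \phi_\infty$ weakly in $H_0^1$ and strongly in $L^p$ by Rellich, and $\phi_\infty$ still satisfies the orthogonality constraint and attains the value $0$ of the quadratic form on the constraint subspace.

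The key closing step will be to promote this minimality of $\phi_\infty$ to an equation: a Lagrange multiplier argument gives $\mathcal{L}\phi_\infty=\mu U$ for some $\mu\in\R$, and testing against $U$ using $\mathcal{L}U=-(p-2)U^{p-1}$ together with the constraint $\int_\Omega U^{p-1}\phi_\infty\,dx=0$ forces $\mu=0$. Then $\phi_\infty \in \ker \mathcal{L}$, which is trivial by (ii), so $\phi_\infty\equiv 0$. Passing to the limit in the quadratic form using the strong $L^p$ convergence of $\phi_n$ (together with $U\in L^\infty$, inherited from elliptic regularity on the convex polygonal domain) gives $\|\nabla \phi_n\|_{L^2}^2 \to 0$, contradicting the normalization. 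The main obstacle I anticipate is the careful identification of the Lagrange multiplier and ensuring the constraint passes cleanly to the weak limit; the rest is routine compactness once the Lin nondegeneracy is available.
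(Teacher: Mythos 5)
Your proposal matches the paper's own treatment: the paper likewise takes (i) and (ii) directly from \cite{Lin} and regards (iii) as a consequence of (ii), deferring the rigorous details to \cite{CHS}, and your second-variation-plus-compactness (Lagrange multiplier) argument is the standard way to fill in exactly that step. The only slip is cosmetic: with the $H^1_0$-orthogonality constraint the multiplier equation should read $\mathcal{L}\phi_\infty=\mu(-\Delta U)=\mu U^{p-1}$ rather than $\mu U$, but testing against $U$ and using $\int_\Omega U^{p-1}\phi_\infty\,dx=0$ still forces $\mu=0$, so the conclusion is unaffected.
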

\begin{rem}
The statements (i) and (ii) is proved in \cite{Lin}. The statement (iii) is a natural consequences of (ii). We refer to \cite{CHS} for the rigorous arguments of the proof.   
\end{rem}


\begin{thebibliography}{20}
\bibitem{Bartels}
S. Bartels, 
{\it Numerical methods for nonlinear partial differential equations},
Springer Series in Computational Mathematics, 47. Springer, Cham, 2015. x+393 pp.


\bibitem{CHS}
W. Choi, Y. Hong and J. Seok,
{\it Optimal convergence rate and regularity of nonrelativistic limit for the nonlinear pseudo-relativistic equations},
J. Funct. Anal.  {\bf 274}  (2018),  no. 3,  695--722.


\bibitem{Fromm}
S. Fromm,
{\it Potential space estimates for Green potentials in convex domains}, 
Proc. Amer. Math. Soc. {\bf 119} (1993),  no. 1, 225--233.


\bibitem{Grisvard}
P. Grisvard, 
{\it Elliptic problems in nonsmooth domains}, 
Pitman, Boston, MA, 1985.


\bibitem{GW}
M. Gr\"{u}ter and K. O. Widman, 
{\it The Green function for uniformly elliptic equations}, 
Manuscripta Math. {\bf 37} (1982), 202--342.


\bibitem{Lin}
C. S. Lin, 
{\it Uniqueness of least energy solutions to a semilinear elliptic equation in $\R^2$},
Manuscripta Math.  {\bf 84} (1994),  no. 1, 13--19. 



\bibitem{Scott} S. Brenner and L. Scott, The mathematical theory of finite element methods. Third edition. Texts in Applied Mathematics, 15. Springer, New York, 2008. xviii+397 pp. 

\bibitem{TSMO} K. Tanaka, K. Sekine, M. Mizuguchi, and S. Oishi, Sharp numerical inclusion of the best constant for embedding $H_0^1 (\Omega)\hookrightarrow L^p (\Omega)$  on bounded convex domain. J. Comput. Appl. Math. (2017), 306--313.

\bibitem{CM} Y. Choi and  P. McKenna, A mountain pass method for the numerical solution of semilinear elliptic problems. Nonlinear Anal.  20  (1993), 417--437. 

\bibitem{LZ} Y. Li and J. Zhou, Algorithms and visualization for solutions of nonlinear elliptic equations. Internat. J. Bifur. Chaos Appl. Sci. Engrg.  10  (2000),  1565--1612.

\bibitem{FJ} E. Faou and T. J\'ezéquel, Convergence of a normalized gradient algorithm for computing ground states. IMA J. Numer. Anal.  38  (2018), 360--376. 
 
\end{thebibliography}
\end{document}